\nonstopmode \numberwithin{equation}{section}
\newtheorem{thm}{Theorem}[section]
\newtheorem{cor}{Corollary}[section]
\newtheorem{lem}{Lemma}[section]
\newtheorem{prop}{Proposition}[section]
\newtheorem{conj}{Conjecture}
\theoremstyle{definition}
\newtheorem{prob}{Problem}[section]
\newtheorem{rem}{Remark}[section]
\newenvironment{customthm}[1]
  {\innercustomthm}
  {\endinnercustomthm}
\newcounter{minutes}\setcounter{minutes}{\time}
\newcounter{hours}\setcounter{hours}{\time}
\newcounter {own}
\def\theown {\thesection       .\arabic{own}}
\newenvironment{pf}[1][]{%
 \vskip 3mm
 \noindent
 \ifthenelse{\equal{#1}{}}%
  {{\slshape Proof. }}%
  {{\slshape #1.} }%
 }%
{\qed\bigskip}
\newcounter{alphabet}
\def\be{\begin{equation}}
\def\ee{\end{equation}}
\newcommand{\bee}{\begin{enumerate}}
\newcommand{\eee}{\end{enumerate}}
\newcommand{\blem}{\begin{lem}}
\newcommand{\elem}{\end{lem}}
\newcommand{\bthm}{\begin{thm}}
\newcommand{\ethm}{\end{thm}}
\newcommand{\bcor}{\begin{cor}}
\newcommand{\ecor}{\end{cor}}
\newcommand{\beg}{\begin{examp}}
\newcommand{\eeg}{\end{examp}}
\newcommand{\begs}{\begin{examples}}
\newcommand{\eegs}{\end{examples}}
\newcommand{\bdefe}{\begin{defin}}
\newcommand{\edefe}{\end{defin}}
\newcommand{\bprob}{\begin{prob}}
\newcommand{\eprob}{\end{prob}}
\newcommand{\bei}{\begin{itemize}}
\newcommand{\eei}{\end{itemize}}
\newcommand{\bcon}{\begin{conj}}
\newcommand{\econ}{\end{conj}}
\newcommand{\bcons}{\begin{conjs}}
\newcommand{\econs}{\end{conjs}}
\newcommand{\bprop}{\begin{prop}}
\newcommand{\eprop}{\end{prop}}
\newcommand{\br}{\begin{rem}}
\newcommand{\er}{\end{rem}}
\newcommand{\brs}{\begin{rems}}
\newcommand{\ers}{\end{rems}}
\newcommand{\bo}{\begin{obser}}
\newcommand{\eo}{\end{obser}}
\newcommand{\bos}{\begin{obsers}}
\newcommand{\eos}{\end{obsers}}
\newcommand{\bpf}{\begin{pf}}
\newcommand{\epf}{\end{pf}}
\newcommand{\ba}{\begin{array}}
\newcommand{\ea}{\end{array}}
\newcommand{\beq}{\begin{eqnarray}}
\newcommand{\beqq}{\begin{eqnarray*}}
\newcommand{\eeq}{\end{eqnarray}}
\newcommand{\eeqq}{\end{eqnarray*}}
\begin{document}

\title{The Schwarzian norm estimates for Janowski convex functions}

\author{Md Firoz Ali}
\address{Md Firoz Ali,
Department of Mathematics, National Institute of Technology Durgapur,
Durgapur- 713209, West Bengal, India.}
\email{ali.firoz89@gmail.com, firoz.ali@maths.nitdgp.ac.in}

\author{Sanjit Pal}
\address{Sanjit Pal,
Department of Mathematics, National Institute of Technology Durgapur,
Durgapur- 713209, West Bengal, India.}
\email{palsanjit6@gmail.com}

\subjclass[2010]{Primary 30C45, 30C55}
\keywords{analytic functions, univalent functions, Janowski convex functions, Schwarzian norm}

\def\thefootnote{}
\footnotetext{ {\tiny File:~\jobname.tex,
printed: \number\year-\number\month-\number\day,
          \thehours.\ifnum\theminutes<10{0}\fi\theminutes }
} \makeatletter\def\thefootnote{\@arabic\c@footnote}\makeatother

\begin{abstract}
For $-1\leq B<A\leq 1$, let $\mathcal{C}(A,B)$ denote the class of normalized Janowski convex functions defined in the unit disk $\mathbb{D}:=\{z\in\mathbb{C}:|z|<1\}$ that satisfy the subordination relation $1+zf''(z)/f'(z)\prec (1+Az)/(1+Bz)$. In the present article, we determine the sharp estimate of the Schwarzian norm for functions in the class $\mathcal{C}(A,B)$. The Dieudonn\'{e}'s lemma which gives the exact region of variability for derivatives at a point of bounded functions, plays the key role in this study, and we also use this lemma to construct the extremal functions for the sharpness by a new method.
\end{abstract}

\thanks{}

\maketitle
\pagestyle{myheadings}
\markboth{Md Firoz Ali and Sanjit Pal}{The Schwarzian norm estimates for Janowski convex functions}

\section{Introduction}
Let $\mathcal{H}$ denote the class of analytic functions in the unit disk $\mathbb{D}:=\{z\in\mathbb{C}:|z|<1\}$ and  $\mathcal{LU}$ denote the subclass of $\mathcal{H}$ consisting of all locally univalent functions, $i.e.$, $\mathcal{LU}=\{f\in\mathcal{H}: f'(z)\ne 0,~\text{for all}~ z\in\mathbb{D}\}$. For a locally univalent function $f\in\mathcal{LU}$, the Schwarzian derivative is defined by
$$S_f(z)=\left[\frac{f''(z)}{f'(z)}\right]^{'}-\frac{1}{2}\left[\frac{f''(z)}{f'(z)}\right]^2,$$
and that of the Schwarzian norm (the hyperbolic sup-norm) is defined by
$$||S_f||=\sup\limits_{z\in\mathbb{D}}(1-|z|^2)^2|S_f(z)|.$$
In $1949$,  Nehari \cite{Nehari-1949} proved that for a locally univalent function $f\in\mathcal{LU}$ with $||S_f||\leq 2$, the function $f$ is univalent in $\mathbb{D}$. Moreover, for a univalent function $f$, it is well known that $||S_f||\leq 6$ (see \cite{Kraus-1936, Nehari-1949}). Both of the constants $2$ and $6$ are best possible.\\

In the theory of quasiconformal mappings and Teichm\"{u}ller spaces, the Schwarzian norm has a significant meaning (see \cite{Lehto-1987}). A mapping $f:\widehat{\mathbb{C}}\rightarrow\widehat{\mathbb{C}}$ of the Riemann sphere $\widehat{\mathbb{C}}=\mathbb{C}\cup\{\infty\}$ is said to be $k$-quasiconformal ($0\le k<1$) mapping if $f$ is a sense preserving homeomorphism of $\widehat{\mathbb{C}}$ and has locally integrable partial derivatives on $\mathbb{C}\setminus\{f^{-1}(\infty)\}$ with $|f_{\bar{z}}|\le k|f_z|$ a.e.. A set of Schwarzian derivatives of analytic and univalent functions on $\mathbb{D}$ with quasiconformal extensions to $\widehat{\mathbb{C}}$ can be used to identify the theory of Teichm\"{u}ler space $\mathcal{T}$. In the Banach space $\mathcal{H}$, of all analytic functions in $\mathbb{D}$, it is known that $\mathcal{T}$ is a bounded domain with respect to finite hyperbolic sup-norm (see \cite{Lehto-1987}). The following theorem establishes a connection between the Schwarzian norm and the quasiconformal mapping.

\begin{customthm}{A}\label{Thm-r-0001}\cite{Ahlfors-Weill-1962, Kuhnau-1971}
If $f$ extends to a $k$-quasiconformal ($0\le k<1$) mapping of the Riemann sphere $\widehat{\mathbb{C}}$ then $||S_f||\le 6k$. Conversely, if $||S_f||\le 2k$ then $f$ extends to a $k$-quasiconformal mapping of the Riemann sphere $\widehat{\mathbb{C}}$.
\end{customthm}
Although the fundamental work on the Schwarzian derivative in connection with the theory of geometric functions have been done in \cite{Ahlfors-Weill-1962, Kuhnau-1971, Nehari-1949}), limited work has been done on the Schwarzian derivative for various subclasses of univalent functions.  Estimating the Schwarzian norm for typical subclasses of univalent functions in relation to Teichm\"{u}ller spaces is an interesting problem.\\

Let $\mathcal{A}$ denote the class of functions $f$ in $\mathcal{H}$ normalized by $f(0)=0$, $f'(0)=1$. Thus, a function $f$ in $\mathcal{A}$ has the Taylor series expansion of the form
\begin{equation}\label{r-00001}
f(z)=z+\sum\limits_{n=2}^{\infty}a_nz^n.
\end{equation}
Let $\mathcal{S}$ be the set of all functions $f\in\mathcal{A}$ that are univalent in $\mathbb{D}$. A function $f\in\mathcal{A}$ is called starlike (respectively, convex) if the image $f(\mathbb{D})$ is a starlike domain with respect to the origin (respectively, convex domain). The classes of all univalent starlike and convex functions are denoted by $\mathcal{S}^*$ and $\mathcal{C}$, respectively. It is well known that a function $f\in\mathcal{A}$ is starlike (respectively, convex) if and only if ${\rm Re\,} [zf'(z)/f(z)] > 0$  (respectively, ${\rm Re\,} [1 + zf''(z)/f'(z)] > 0$) for $z\in \mathbb{D}$. A function $f\in\mathcal{A}$ is said to be starlike (respectively, convex) of order $\alpha$, $0\le \alpha< 1$ if ${\rm Re\,} [zf'(z)/f(z)] > \alpha$ (respectively, ${\rm Re\,} [1 + zf''(z)/f'(z)] > \alpha$) for $z\in \mathbb{D}$. The set of all starlike and  convex functions of order $\alpha$ are denoted by  $\mathcal{S}^*(\alpha)$ and $\mathcal{C} (\alpha)$, respectively. See \cite{Duren-1983,Goodman-book-1983} for further information on these classes.\\

For the class of convex functions $\mathcal{C}$, the Schwarzian norm satisfies $||S_f||\le 2$ and the estimate is sharp. This result was proved repeatedly by many researchers (see \cite{Lehto-1977, Nehari-1976, Robertson-1969}). In 1996, Suita \cite{Suita-1996} studied the class $\mathcal{C}(\alpha)$, $0\le \alpha\le 1$ and using integral representation for functions in  $\mathcal{C}(\alpha)$ proved that the Schwarzian norm satisfies the following sharp inequality
$$||S_f||\le
\begin{cases}
2 & \text{ if }~ 0\le \alpha\le 1/2,\\
8\alpha(1-\alpha) & \text{ if }~ 1/2\le \alpha\le 1.
\end{cases}
$$
A function $f\in\mathcal{A}$ is called strongly starlike (respectively, strongly convex) of order $\alpha$, $0<\alpha<1$ if  $|\arg\{zf'(z)/f(z)\}|<\pi\alpha/2$ (respectively, $|\arg\{1+zf''(z)/f'(z)\}|<\pi\alpha/2$) for $z\in \mathbb{D}$. The classes of strongly starlike and strongly convex functions of order $\alpha$ are denoted by $\mathcal{S^*_{\alpha}}$ and $\mathcal{C}_{\alpha}$, respectively. For geometric properties on $\mathcal{S}^*_{\alpha}$, we refer to \cite{Kanas-2005, Kanas-Sugawa-2005}.
For $0<\alpha<1$, Fait {\it et al.} \cite{Fait-Krzyz-Zygmunt-1976} studied the class $\mathcal{S}^*_{\alpha}$ and  proved that a function $f\in\mathcal{S}^*_{\alpha}$ extends to an $\sin(\pi\alpha/2)$-quasiconformal mapping of $\widehat{\mathbb{C}}$. It is obvious from Theorem \ref{Thm-r-0001} that the norm satisfies $||S_f||\le 6\sin(\pi\alpha/2)$ which was pointed out by Chiang \cite{Chiang-1991}.
Kanas and Sugawa \cite{Kanas-Sugawa-2011} studied the Schwarzian norm for functions in the class $\mathcal{C}_{\alpha}$, $0<\alpha<1$ and proved that the sharp inequality $||S_f||\leq 2\alpha$ and therefore, $f$ extends to an $\alpha$-quasiconformal mapping of $\widehat{\mathbb{C}}$.\\

A function $f\in \mathcal{A}$ is said to be uniformly convex (see \cite{Goodman-1991, Ma-Minda-1992, Ronning-1993}) if and only if
$${\rm Re\,}\left(1+\frac{zf''(z)}{f'(z)}\right)> \left|\frac{zf''(z)}{f'(z)}\right|\quad\text{for}~ z\in\mathbb{D}.$$
Kanas and Sugawa \cite{Kanas-Sugawa-2011} proved the sharp inequality $||S_f||\le 8/\pi^2$ for $f\in\mathcal{UCV}$. They also proved that $f$ can be extended to a $4/\pi^2$-quasiconformal mapping of $\widehat{\mathbb{C}}$. In 2012, Bhowmik and Wirths \cite{Bhowmik-Wirths-2012} studied the class of concave functions $f\in\mathcal{A}$, with opening angle at infinity less than or equal to $\pi\alpha$, $\alpha\in [1,2]$, and obtained the sharp estimate $||S_f||\le 2(\alpha^2-1)$ and $f$ extends to an $(\alpha^2-1)$-quasiconformal mapping of $\widehat{\mathbb{C}}$ for $1\le \alpha<\sqrt{2}$. Recently, the present authors \cite{Ali-Sanjit-2022} considered two classes of functions $\mathcal{G}(\beta)$ with $\beta>0$ and $\mathcal{F}(\alpha)$ with $-\frac{1}{2}\le \alpha\le 0$, consisting of functions in $\mathcal{A}$ that satisfy the relation ${\rm Re\,}\left(1+\frac{zf''(z)}{f'(z)}\right)<1+\frac{\beta}{2}$ for $z\in\mathbb{D}$, and ${\rm Re\,}\left(1+\frac{zf''(z)}{f'(z)}\right)>\alpha$ for $z\in\mathbb{D}$, respectively,
and obtained the sharp estimates $||S_f||\le 2\beta(\beta+2)$ for $f\in\mathcal{G}(\beta)$ and $||S_f||\le 2(1-\alpha)/(1+\alpha)$ for $f\in\mathcal{F}(\alpha)$. Moreover, a function $f\in\mathcal{G}(\beta)$ can be extended to a $\beta(\beta+2)$-quasiconformal mapping of $\widehat{\mathbb{C}}$ for $0<\beta<\sqrt{2}-1$.\\

Let $f$ and $g$ be two analytic functions in $\mathbb{D}$. The function $f$ is said to be subordinate to $g$ if there exists an analytic function $\omega:\mathbb{D}\rightarrow\mathbb{D}$ with $\omega(0)=0$ such that $f(z)=g(\omega(z))$ and it is denoted by $f\prec g$. Moreover, when $g$ is univalent, $f\prec g$ if and only if $f(0)=g(0)$ and $f(\mathbb{D})\subset g(\mathbb{D})$. In this article, we consider the class of Janowski convex functions $f\in\mathcal{A}$ satisfying
$$1+\frac{zf''(z)}{f'(z)}\prec \frac{1+Az}{1+Bz},$$
where $-1\le B<A\le 1$. The class of all Janowski convex functions was first introduced and studied by Janowski \cite{Janowski-1973a, Janowski-1973b} and is denoted by $\mathcal{C}(A,B)$. It is easy to see that functions in $\mathcal{C}(A, B)$ are also convex functions. Silverman and Silvia \cite{Silverman-Silvia-1985} proved a necessary and sufficient conditions for the class $\mathcal{C}(A, B)$.
Kim and Sugawa \cite{Kim-Sugawa-2006}, and Ponnusamy and Sahoo \cite{Ponnusamy-Sahoo-2008} also studied the class $\mathcal{C}(A, B)$ and obtained the sharp bound of pre-Schwarzian norm. For particular values of $A$ and $B$, the class reduces to many well known classes. For example, the class $\mathcal{C}(1,-1)=:\mathcal{C}$ is the family of convex functions; for $0\leq\alpha<1$, $\mathcal{C}(1-2\alpha,-1)=:\mathcal{C}(\alpha)$ is the family of convex functions of order $\alpha$, introduced by Robertson \cite{Robertson-1936}.
For $-1\le B<A\le 1$,  define the function
\begin{equation}\label{r-00010}
K_{A,B}(z)=\begin{cases}
(1/A)\{(1+Bz)^{A/B}-1\} & \text{ if }~ A\neq 0,~B\neq 0,\\
(1/B)\log (1+Bz) & \text{ if }~ A=0,\\
(1/A) (e^{Az}-1) & \text{ if }~ B=0.
\end{cases}
\end{equation}
It is easy to show that $K_{A,B}(z)$ belongs to the class $\mathcal{C}(A,B)$. The function $K_{A,B}(z)$ play the role of extremal function for many extremal problems in the class $\mathcal{C}(A,B)$.\\

In the present article, our main aim is to find the sharp estimates of the absolute value of Schwarzian derivative and Schwarzian norm for functions in the class $\mathcal{C}(A,B)$.

\section{Main Results}
Let $\mathcal{B}$ be the class of analytic functions $\omega:\mathbb{D}\rightarrow\mathbb{D}$ and $\mathcal{B}_0$ be the class of Schwarz functions $\omega\in\mathcal{B}$ with $\omega(0)=0$.  According to the Schwarz's lemma if a function $\omega\in\mathcal{B}_0$, then $|\omega(z)|\le |z|$ and $|\omega'(0)|\le 1$. In each of these inequalities, the equality occurs if and only if $\omega(z)=e^{i\alpha}z$, $\alpha\in\mathbb{R}$. The Schwarz-Pick lemma, a natural extension of the Schwarz lemma, yields the estimate $|\omega'(z)|\le (1-|\omega(z)|^2)/(1-|z|^2)$, $z\in\mathbb{D}$ when $\omega\in\mathcal{B}$. In 1931, Dieudonn\'{e} \cite{Dieudonne-1931}  was the first to determine the precise range of variability of $\omega'(z_0)$ for a fixed $z_0\in\mathbb{D}$ over the class $\mathcal{B}_0$.

\begin{lem}[Dieudonn\'{e}'s lemma]\cite{Dieudonne-1931, Duren-1983}
Let $\omega\in\mathcal{B}_0$ and $z_0\ne 0$ be a fixed point in $\mathbb{D}$. The region of variability of $\omega'(z_0)$ is given by
\begin{equation}\label{r-000001}
\left|\omega'(z_0)-\frac{\omega(z_0)}{z_0}\right|\le \frac{|z_0|^2-|\omega(z_0)|^2}{|z_0|(1-|z_0|^2)}.
\end{equation}
Moreover, the equality occurs in \eqref{r-000001} if and only if $\omega\in\mathcal{B}_0$ is a Blaschke product of degree $2$.
\end{lem}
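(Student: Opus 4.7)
The natural route is to reduce the problem to the Schwarz--Pick lemma via the auxiliary function $g(z)=\omega(z)/z$. The plan is as follows.

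First, I would invoke Schwarz's lemma. Since $\omega\in\mathcal{B}_0$ satisfies $\omega(0)=0$, the quotient $g(z):=\omega(z)/z$ has a removable singularity at the origin and extends to an analytic function on $\mathbb{D}$ with $|g(z)|\le 1$; that is, $g\in\mathcal{B}$.

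Next, I would apply the Schwarz--Pick lemma to $g$ at the point $z_0\ne 0$, giving
\begin{equation*}
|g'(z_0)|\le \frac{1-|g(z_0)|^2}{1-|z_0|^2}=\frac{|z_0|^2-|\omega(z_0)|^2}{|z_0|^2(1-|z_0|^2)}.
\end{equation*}
A short computation yields
\begin{equation*}
g'(z_0)=\frac{z_0\omega'(z_0)-\omega(z_0)}{z_0^2}=\frac{1}{z_0}\left(\omega'(z_0)-\frac{\omega(z_0)}{z_0}\right),
\end{equation*}
so multiplying the Schwarz--Pick estimate by $|z_0|$ produces exactly the inequality \eqref{r-000001}. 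This is the whole content of the quantitative part; there is no serious obstacle, only bookkeeping.

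For the equality statement, the main point I would invoke is the rigidity of Schwarz--Pick: equality in $|g'(z_0)|\le (1-|g(z_0)|^2)/(1-|z_0|^2)$ at even a single point of $\mathbb{D}$ forces $g$ to be a conformal automorphism of $\mathbb{D}$, hence $g(z)=e^{i\alpha}\,(z-a)/(1-\bar a z)$ for some $a\in\mathbb{D}$ and $\alpha\in\mathbb{R}$. Consequently,
\begin{equation*}
\omega(z)=z\,g(z)=e^{i\alpha}\,\frac{z(z-a)}{1-\bar a z},
\end{equation*}
which is a Blaschke product of degree $2$ (with zeros at $0$ and $a$). Conversely, any such Blaschke product has $g(z)=\omega(z)/z$ a disk automorphism and therefore saturates Schwarz--Pick, so equality holds in \eqref{r-000001}. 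The delicate part, if anything, is the rigidity invocation, but that is standard and requires no calculation here.
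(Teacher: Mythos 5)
The paper does not prove this lemma; it is quoted from Dieudonn\'e and Duren with citations only, so there is no in-paper argument to compare against. Your reduction to the Schwarz--Pick lemma via $g(z)=\omega(z)/z$ is the standard proof and is correct: the inequality follows exactly as you compute, and the equality case follows from Schwarz--Pick rigidity. The only point worth flagging is the degenerate case $|\omega(z_0)|=|z_0|$: there Schwarz's lemma forces $\omega(z)=e^{i\alpha}z$, so $g$ is a unimodular constant rather than an automorphism, both sides of \eqref{r-000001} vanish, and equality holds even though $\omega$ is a Blaschke product of degree $1$; this is a (harmless) imprecision in the lemma as stated rather than a defect of your argument, but your rigidity step should technically exclude that case before asserting $g\in\Aut(\mathbb{D})$.
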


The Dieudonn\'{e}'s lemma is an extension of the Schwarz's lemma as well as Schwarz-Pick lemma. Here, we note that a Blaschke product of degree $n\in\mathbb{N}$ is of the form
$$B(z)=e^{i\theta}\prod_{j=1}^{n}\frac{z-z_j}{1-\bar{z_j}z},\quad z,z_j\in\mathbb{D}, ~\theta\in\mathbb{R}.$$
The Dieudonn\'{e}'s lemma will be key in proving our main results.\\

Before we state our main results, we introduce some sets which we will use throughout our next discussion. Let $E:=\{(A,B):-1\le B<A\le 1\}$ and we consider following subsets of $E$.
\begin{align}\label{r-000005}
\begin{cases}
E_1&:=\{(A,B)\in E:1-\sqrt{1-B^2}< |A+B|< 1+\sqrt{1-B^2}\},\\
E_2&:=\{(A,B)\in E\cup E_1^{c}:|A+B|\le |B|\}\\
&~=\{(A,B)\in E:|A+B|\le 1-\sqrt{1-B^2},~|A+B|\le |B|\},\\
E_3&:=\{(A,B)\in E\cup E_1^{c}:|A+B|> |B|\}\\
&~=\{(A,B)\in E:|A+B|\ge 1+\sqrt{1-B^2},~|A+B|> |B|\}.
\end{cases}
\end{align}
It evident that $E_1, E_2$ and $E_3$ are mutually disjoint and $E=E_1\cup E_2\cup E_3$.

\begin{thm}\label{Thm-r-00005}
For $-1\le B<A\le 1$, let $ f \in\mathcal{C}(A,B)$ be of the form \eqref{r-00001} and $E_1$, $E_2$, $E_3$ are given by \eqref{r-000005}. Then the Schwarzian derivative $S_f(z)$ satisfies the inequality
\begin{equation}\label{r-000010}
|S_f(z)|\le \dfrac{(A-B)(2-|A+B|(1-|z|^2))}{(1-|z|^2)(2-|A+B|(1-|z|^2)-2B^2|z|^2))},\quad z\in\mathbb{D}
\end{equation}
for $(A,B)\in E_1\cup E_2$ and
\begin{equation}\label{r-000015}
|S_f(z)|\le
\begin{cases}
\dfrac{(A-B)(2-|A+B|(1-|z|^2))}{(1-|z|^2)(2-|A+B|(1-|z|^2)-2B^2|z|^2))} & \text{ if }~ z\in S\cap\mathbb{D}, \\[6mm]
\dfrac{|A^2-B^2|}{2(1-|B||z|)^2} & \text{ if }~ z\in S^c\cap\mathbb{D}
\end{cases}
\end{equation}
for $(A,B)\in E_3$, where $S=\{z\in\mathbb{C}: |z|< \delta_1 ~\text{or,}~ |z|> \delta_2\}$ with
$$\delta_1=\frac{|B|-\sqrt{B^2-|A+B|(2-|A+B|)}}{|A+B|}$$
and
$$\delta_2=\frac{|B|+\sqrt{B^2-|A+B|(2-|A+B|)}}{|A+B|}.$$
\end{thm}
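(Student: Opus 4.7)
The strategy is to parametrise $f \in \mathcal{C}(A,B)$ by a Schwarz function $\omega \in \mathcal{B}_0$ via the defining subordination, express $S_f$ entirely in terms of $\omega(z)$ and $\omega'(z)$, and then apply Dieudonn\'{e}'s lemma to reduce the problem to a one-variable maximisation in the auxiliary variable $t := |\omega(z)|$. First I would write $1 + zf''(z)/f'(z) = (1+A\omega(z))/(1+B\omega(z))$, which gives $f''(z)/f'(z) = (A-B)\omega(z)/[z(1+B\omega(z))]$. Differentiating once and assembling $S_f = (f''/f')' - \tfrac{1}{2}(f''/f')^{2}$ leads, after collecting terms, to the compact identity
\begin{equation*}
S_{f}(z) = \frac{A-B}{z(1+B\omega(z))^{2}}\left[\omega'(z) - \frac{\omega(z)}{z}\left(1 + \frac{(A+B)\omega(z)}{2}\right)\right].
\end{equation*}
Crucially, $\omega(z)/z$ is exactly the centre of the Dieudonn\'{e} disk, so writing $\omega'(z) = \omega(z)/z + \xi$ with $|\xi| \le (|z|^{2}-|\omega(z)|^{2})/(|z|(1-|z|^{2}))$ collapses the bracket to $\xi - (A+B)\omega(z)^{2}/(2z)$.

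Using the triangle inequality, the bound $|1+B\omega(z)| \ge 1 - |B|t$ (attained by choosing $\arg\omega$ so that $B\omega(z)$ is negative real), and the freedom in $\arg\xi$ to align the two terms inside the brackets, I obtain the pointwise majorant
\begin{equation*}
|S_{f}(z)| \le \frac{|A-B|}{r^{2}(1-|B|t)^{2}}\left[\frac{r^{2}-t^{2}}{1-r^{2}} + \frac{|A+B|\,t^{2}}{2}\right] =: \Phi(t),
\end{equation*}
where $r := |z|$ and $t$ ranges over $[0,r]$. A direct differentiation shows that the numerator of $\Phi'(t)$ simplifies (most cross terms cancel) to $2(|B|\alpha - \beta t)$ with $\alpha := r^{2}/(1-r^{2})$ and $\beta := (2 - |A+B|(1-r^{2}))/(2(1-r^{2}))$, which is positive since $|A+B| \le 2$. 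Hence $\Phi$ has a unique critical point $t^{\ast} = 2|B|r^{2}/(2 - |A+B|(1-r^{2}))$, and it is a maximum. A routine substitution shows that $\Phi(t^{\ast})$ equals the right-hand side of \eqref{r-000010}, while $\Phi(r) = |A^{2}-B^{2}|/(2(1-|B|r)^{2})$, matching the second line of \eqref{r-000015}. Which of these is the effective majorant depends on whether $t^{\ast} \le r$, and an algebraic rearrangement turns this into the condition $Q(r) := |A+B|r^{2} - 2|B|r + (2 - |A+B|) \ge 0$, whose (possibly non-real) roots are precisely $\delta_{1} \le \delta_{2}$ as defined in the statement.

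The final task is a case analysis determining where $Q(r)\ge 0$ holds on $\mathbb{D}$. The discriminant of $Q$ equals $4(B^{2} - |A+B|(2-|A+B|))$, which is negative exactly on the open strip defining $E_{1}$; there $Q > 0$ throughout $\mathbb{D}$, so $t^{\ast} < r$ always and \eqref{r-000010} holds. For $(A,B) \in E_{2}$, i.e., when $|A+B| \le |B|$, I would square the candidate inequality $|B| - |A+B| \ge \sqrt{B^{2} - |A+B|(2-|A+B|)}$ and check that it reduces to $|B| \le 1$, which is immediate; this gives $\delta_{1} \ge 1$, so $Q \ge 0$ on $\mathbb{D}$ and \eqref{r-000010} again holds. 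For $(A,B) \in E_{3}$ the symmetric manipulation applied to $|A+B| - |B| \ge \sqrt{B^{2}-|A+B|(2-|A+B|)}$ yields $\delta_{2} \le 1$, so $0 < \delta_{1} \le \delta_{2} \le 1$, $Q \ge 0$ on $S \cap \mathbb{D}$ and $Q < 0$ on $S^{c}\cap\mathbb{D}$, producing exactly the piecewise bound \eqref{r-000015}. The main obstacle I anticipate is this last piece of bookkeeping: the regions $E_{1}, E_{2}, E_{3}$ are engineered to encode precisely how $\delta_{1}$ and $\delta_{2}$ sit relative to $1$, and matching all three regimes requires tracking several sign conditions, even though each individual calculation is elementary.
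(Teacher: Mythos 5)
Your proposal is correct and follows essentially the same route as the paper: the same Dieudonn\'{e}-based decomposition $\omega'(z)=\omega(z)/z+\xi$, the same majorant $\Phi$ (the paper's $g$), the same critical point $t^{*}=2|B|r^{2}/(2-|A+B|(1-r^{2}))$, and the same quadratic $Q$ (the paper's $k$) deciding whether $t^{*}\le r$, with the case analysis over $E_1,E_2,E_3$ driven by the discriminant and the location of $\delta_1,\delta_2$ relative to $1$. The only differences are cosmetic --- you locate $\delta_1,\delta_2$ by direct squaring where the paper uses sign and monotonicity arguments, and the degenerate cases $A+B=0$ (where $\delta_1,\delta_2$ are undefined but $Q>0$ trivially) and $z=0$ deserve the separate one-line treatment the paper gives them.
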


\begin{proof}
For $-1\le B<A\le 1$, let $ f \in\mathcal{C}(A,B)$ be of the form \eqref{r-00001}. Then we have
$$1+\frac{zf''(z)}{f'(z)}\prec \frac{1+Az}{1+Bz}.$$
Thus, there exists an analytic function $\omega:\mathbb{D}\rightarrow\mathbb{D}$ with $\omega(0)=0$ such that
$$1+\frac{zf''(z)}{f'(z)}= \frac{1+A\omega(z)}{1+B\omega(z)}.$$
A simple computation gives
$$\frac{f''(z)}{f'(z)}=\frac{(A-B)\omega(z)}{z(1+B\omega(z))},$$
and consequently,
\begin{align}\label{r-000017}
S_f(z)&=\left[\frac{f''(z)}{f'(z)}\right]^{'}-\frac{1}{2}\left[\frac{f''(z)}{f'(z)}\right]^2\\
&=(A-B)\left[\frac{\omega'(z)}{z(1+B\omega(z))^2}-\frac{2\omega(z)+(A+B)\omega^2(z)}{2z^2(1+B\omega(z))^2}\right].\nonumber
\end{align}
Let us consider the transformation $\displaystyle\zeta(z)=\omega'(z)-\frac{\omega(z)}{z}$. By Dieudonn\'{e}'s lemma, the function $\zeta$ varies over the closed disk
$$|\zeta(z)|\le \frac{|z|^2-|\omega(z)|^2}{|z|(1-|z|^2)},$$
for fixed $|z|<1$ and $z\ne 0$. Using the transformation of $\zeta$ in \eqref{r-000017}, we obtain
$$S_f(z)=(A-B)\left[-\frac{(A+B)\omega^2(z)}{2z^2(1+B\omega(z))^2}+\frac{\zeta(z)}{z(1+B\omega(z))^2}\right].$$
Thus,
\begin{align*}
|S_f(z)|&\le (A-B)\left[\frac{|A+B||\omega(z)|^2}{2|z|^2|1+B\omega(z)|^2}+\frac{|\zeta(z)|}{|z||1+B\omega(z)|^2}\right]\\
&\le(A-B)\left[\frac{|A+B||\omega(z)|^2}{2|z|^2(1-|B||\omega(z)|)^2}+\frac{|z|^2-|\omega^2(z)|}{|z|^2(1-|z|^2)(1-|B||\omega(z)|)^2}\right].
\end{align*}
For $0\le s:=|\omega(z)|\le |z|<1$, we have
\begin{align}\label{r-000020}
|S_f(z)|&\le (A-B)\left[\frac{|A+B|s^2}{2|z|^2(1-|B|s)^2}+\frac{|z|^2-s^2}{|z|^2(1-|z|^2)(1-|B|s)^2}\right]\\
&=(A-B)\frac{2|z|^2-s^2(2-|A+B|(1-|z|^2))}{2|z|^2(1-|z|^2)(1-|B|s)^2}\nonumber\\
&=(A-B)g(s),\nonumber
\end{align}
where
\begin{equation}\label{r-000021}
g(s)=\frac{2|z|^2-s^2(2-|A+B|(1-|z|^2))}{2|z|^2(1-|z|^2)(1-|B|s)^2},\quad 0\le s\le |z|<1.
\end{equation}
Now, we wish to find the maximum value of $g(s)$ over the closed interval $[0,|z|]$. To do this, we first find the critical points of $g(s)$ in $(0,|z|)$. A simple computation gives

\begin{equation*}
g'(s)=\frac{2|B||z|^2-s(2-|A+B|(1-|z|^2))}{|z|^2(1-|z|^2)(1-|B|s)^3},
\end{equation*}
and so $g'(s)=0$ yields
\begin{equation}\label{r-000023}
s=\frac{2|B||z|^2}{2-|A+B|(1-|z|^2)}=:s_0(|z|).
\end{equation}

Now, we have to check for what values of $A$ and $B$, the point $s=s_0(|z|)$ lies  in $(0,|z|)$. We first note that the inequality
\begin{align}\label{r-000025}
 s_0(|z|)=\frac{2|B||z|^2}{2-|A+B|(1-|z|^2)}< |z|,
\end{align}
holds true if and only if $k(|z|)>0$, where
\begin{equation}\label{r-000027}
k(|z|)=2-|A+B|-2|B||z|+|A+B||z|^2.
\end{equation}
We also note that $k(0)=2-|A+B|>0$ and the discriminant of $k(|z|)$ is given by
$$\Delta=4((|A+B|-1)^2+B^2-1).$$

Let $(A,B)\in E_1$. If $B=0$ then $g'(s)<0$ in $(0,|z|)$ and so, $g(s)$ is a decreasing function in $(0,|z|)$. Thus, the maximum of $g(s)$ attains at $s=0$. Thus, for $B=0$, the desired result \eqref{r-000010} follows from \eqref{r-000020} and \eqref{r-000021}. If $B\ne 0$ then $s_0(|z|)>0$ and $\Delta<0$. Hence, in this case $k(|z|)$ has no real zero and so $k(|z|)>0$. Consequently, $s_0(|z|)$ lies  in $(0,|z|)$.\\


Let $(A,B)\in E_2$. Then clearly $B\ne 0$, $s_0(|z|)>0$ and $\Delta \ge 0$. If $A+B=0$, then clearly $k(|z|)>0$ and consequently, $s_0(|z|)$ lies  in $(0,|z|)$. If $A+B\ne 0$ then the zeros of $k(|z|)$ in the real line are given by
\begin{equation}\label{r-000030a}
\delta_1=\frac{|B| - \sqrt{B^2-|A+B|(2-|A+B|)}}{|A+B|}
\end{equation}
and
\begin{equation}\label{r-000030b}
\delta_2=\frac{|B| + \sqrt{B^2-|A+B|(2-|A+B|)}}{|A+B|}.
\end{equation}
For $(A,B)\in E_2$ with $A+B\ne 0$, we have $|B|/|A+B|\ge 1$ and so $\delta_2\not\in(0,1)$. Further, $k(0)>0$ and $k(1)=2(1-|B|)>0$ for $B\ne -1$. Therefore if $B\neq -1$ then $k(|z|)$ has either two zeros or no zero in $(0,1)$. Consequently, for $B\neq -1$, $k(|z|)$ has no zero in $(0,1)$ and so, $s_0(|z|)$ lies  in $(0,|z|)$. Again, if $B=-1$ then $\delta_1=1\not\in(0,1)$ and
$$
\delta_2=\frac{2}{|A-1|}-1\ge 1.
$$
Hence, for $B= -1$, $k(|z|)$ has no zero in $(0,1)$ and so, $s_0(|z|)$ lies  in $(0,|z|)$.\\

Therefore, $s_0(|z|)$ lies  in $(0,|z|)$ if $(A,B)\in E_1\cup E_2\setminus\{(A,0)\}$. Since the numerator of $g'(s)$ is a linear function of $s$, and $g'(0)=2|B|/(1-|z|^2)>0$, $g'(s_0(|z|))=0$, it follows that $g'(|z|)<0$. Hence the function $g(s)$ is increasing in $(0, s_0(|z|))$ and decreasing in $(s_0(|z|),|z|)$ and consequently, the maximum of $g(s)$ attains at $s_0(|z|)$. Thus the desired result \eqref{r-000010} follows from \eqref{r-000020} and \eqref{r-000021}.\\

Let $(A,B)\in E_3$. Again, if $B=-1$ then $k(|z|)$ have two zeros $\delta_1=\frac{2}{|A-1|}-1\in(0,1)$ and $\delta_2=1\not\in(0,1)$.
If $B\ne-1$ then the function $k(|z|)$ have two zeros $\delta_1$ and $\delta_2$ in $(0,1)$ which are given by \eqref{r-000030a} and \eqref{r-000030b}, respectively. In any case, by Rolle's theorem, the function $k'(|z|)$ has exactly one zero, say $\alpha$, in $(\delta_1,\delta_2)$. Since, $k'(0)=-2|B|<0$ and $k'(1)=2(|A+B|-|B|)>0$, the function $k(|z|)$ is strictly decreasing in $(0,\alpha)$ and strictly increasing in $(\alpha,1)$. Thus, we conclude that $k(|z|)> 0$ if $z\in S\cap\mathbb{D}$, where $S=\{z\in\mathbb{C}: |z|< \delta_1 ~\text{or,}~ |z|> \delta_2\}$; and $k(|z|)< 0$ if $\delta_1< |z|< \delta_2$. Therefore, $s_0(|z|)$ lies  in $(0,|z|)$ if $z\in S\cap\mathbb{D}$, and $s_0(|z|)$ does not lie in $(0,|z|)$ if $z\in S^c\cap\mathbb{D}$.\\

Therefore, for $z\in S\cap\mathbb{D}$, following the same argument as before, the function $g(s)$ is increasing in $(0, s_0(|z|))$ and decreasing in $(s_0(|z|),|z|)$ and consequently, the maximum of $g(s)$ attains at $s_0(|z|)$. Thus the desired result \eqref{r-000015} follows from \eqref{r-000020} and \eqref{r-000021}. Again, for $z\in S^c\cap\mathbb{D}$, from \eqref{r-000021}, we have
\begin{align*}
g(|z|)= \frac{|A+B|}{2(1-|B||z|)^2}= \frac{|A+B|(1-|z|^2)}{2(1-|B||z|)^2}g(0)\ge g(0)
\end{align*}
as $k(|z|)\le 0$ for $z\in S^c\cap\mathbb{D}$. Thus, the desired result \eqref{r-000015} follows immediately. This completes the proof.

\end{proof}

Before we proceed further, let us discuss the sharpness of the estimate of $|S_f(z)|$ obtained in Theorem \ref{Thm-r-00005}.\\

Let $(A,B)\in E_3$. We consider the function $K_{A,B}(z)\in \mathcal{C}(A,B)$ defined by \eqref{r-00010}. The Schwarzian derivative of $K_{A,B}$ is given by
$$S_{K_{A,B}}(z)=-\frac{A^2-B^2}{2(1+Bz)^2}.$$
For $B>0$ and any $z_0\in S^c\cap\mathbb{D}$ with $-1<z_0\le 0$, we have $|S_{K_{A,B}}(z_0)|=|A^2-B^2|/2(1-|B||z_0|)^2$. Again, for $B<0$ and any $z_0\in S^c\cap\mathbb{D}$ with $0\le z_0< 1$, we have $|S_{K_{A,B}}(z_0)|=|A^2-B^2|/2(1-|B||z_0|)^2$. This shows that the second inequality in \eqref{r-000015} is sharp in such cases.\\

Next, suppose that $A, B$ and $z_0$ satisfy either of the following two conditions:
\begin{enumerate}
\item [\textbf{(C1)}] \quad $(A,B)\in E_1\cup E_2$ and $-1<z_0<1$,
\item [\textbf{(C2)}] \quad $(A,B)\in E_3$ and $-1<z_0<1$ with $z_0\in S\cap\mathbb{D}$.
\end{enumerate}
Depending on $A$ and $B$, we choose a pair of unimodular real numbers $(p,q)$ as follows:
\begin{align}\label{r-000033}
\begin{cases}
(p,q)=(1,1) & \text{when}~ A+B\le 0,\\
(p,q)=(-1,1) & \text{when}~ A+B>0, B\ge 0,\\
(p,q)=(-1,-1) & \text{when}~ A+B>0, B<0.
\end{cases}
\end{align}
For given $A, B$ and $z_0$ satisfying \textbf{(C1)} or \textbf{(C2)}, we choose $(p,q)$ as above and consider the function $f_{z_0,p,q}$ defined by
\begin{equation}\label{r-000035}
1+\frac{zf_{z_0,p,q}''(z)}{f_{z_0,p,q}'(z)}= \frac{1+A\phi(z)}{1+B\phi(z)},
\end{equation}
where
$$\phi(z)=\frac{pz(z-b)}{1-bz},$$
and $b$ is a solution of the equation
\begin{align}\label{r-000040}
&\frac{z_0(z_0-b)}{1-bz_0} =q s_0(|z_0|)=\frac{2q|B|z_0^2}{2-|A+B|(1-z_0^2)},\nonumber\\
&i.e.,~ b=\frac{z_0(2-2q|B|-|A+B|(1-z_0^2))}{2-2q|B|z_0^2-|A+B|(1-z_0^2)}.
\end{align}
We know that whenever $A, B$ and $z_0$ satisfy \textbf{(C1)} or \textbf{(C2)}, the point $s_0(|z_0|)$ lies in $[0,|z_0|)$, where $s_0(|z|)$ is given by \eqref{r-000023}. This ensures that $b\in (-1,1)$ and $\phi$ is a Blaschke product of degree $2$ with $\phi(0)=0$. Hence, the function $f_{z_0,p,q}$ belong to the class $\mathcal{C}(A,B)$. For the function $f_{z_0,p,q}$, the Schwarzian derivative is given by
\begin{align*}
S_{f_{z_0,p,q}}(z)&=\left[\frac{f_{z_0,p,q}''(z)}{f_{z_0,p,q}'(z)}\right]^{'}-\frac{1}{2}\left[\frac{f_{z_0,p,q}''(z)}{f_{z_0,p,q}'(z)}\right]^2\\
&=(A-B)\left[-\frac{(A+B)\phi^2(z)}{2z^2(1+B\phi(z))^2}+\frac{\phi'(z)-\frac{\phi(z)}{z}}{z(1+B\phi(z))^2}\right]\\
&=(A-B)\frac{-(A+B)(z-b)^2+2p(1-b^2)}{2(1-bz+Bpz(z-b))^2}.\\
\end{align*}
Substituting the value of $b$, given in \eqref{r-000040}, and then evaluating the Schwarzian derivative $S_{f_{z_0,p,q}}(z)$ at $z_0$, we obtain
$$S_{f_{z_0,p,q}}(z_0)=-(A-B)\frac{2q^2|B|^2z_0^2(2p+(A+B)(1-z_0^2))-p(2-|A+B|(1-z_0^2))^2}{(1-z_0^2)(2-|A+B|(1-z_0^2)+2pqB|B|z_0^2)^2}.$$
Therefore, for any pair of $(p,q)$, given in \eqref{r-000033}, we have
\begin{equation}\label{r-000043}
|S_{f_{z_0,p,q}}(z_0)|=\frac{(A-B)(2-|A+B|(1-|z_0|^2))}{(1-|z_0|^2)(2-|A+B|(1-|z_0|^2)-2B^2|z_0|^2)}.
\end{equation}
This shows that the inequality \eqref{r-000010} and the first inequality in \eqref{r-000015} are sharp for real $z$.\\

The above discussion shows that the estimate of the Schwarzian derivative $|S_f(z)|$ obtained in Theorem \ref{Thm-r-00005} is sharp for certain real values of $z$. This also helps us to obtain the sharp estimate of the Schwarzian norm $||S_f(z)||$ for functions in $\mathcal{C}(A,B)$ which is given below.

\begin{thm}\label{Thm-r-00010}
For $-1\leq B<A\leq 1$, let $f \in\mathcal{C}(A,B)$ be of the form \eqref{r-00001} and $E_1$, $E_2$, $E_3$ are given by \eqref{r-000005}.
\begin{itemize}
\item[(i)] If $(A,B)\in E_1\cup E_2$, then
\begin{equation}\label{r-000045}
||S_f||\le \begin{cases}
2, & \text{ for }~  B=-1,\\[2mm]
A-B, & \text{ for }~ B\ne -1,~|A+B|\le 2(1-B^2),\\[2mm]
(A-B)\gamma(\alpha), & \text{ for }~B\ne -1,~ |A+B|>2(1-B^2),
\end{cases}
\end{equation}
where $\gamma$ is given by
\begin{equation}\label{r-000040a}
\gamma(t)=\frac{(1-t^2)(2-|A+B|(1-t^2))}{2-|A+B|(1-t^2)-2B^2t^2},
\end{equation}
and $\alpha$ is the unique root in $(0,1)$ of the equation $h(t)=0$ with
\begin{align}\label{r-000040b}
h(t)=&(2-|A+B|)(|A+B|+2B^2-2)-2|A+B|(2-|A+B|)t^2\\
&+|A+B|(2B^2-|A+B|)t^4.\nonumber
\end{align}

\item[(ii)] If $(A,B)\in E_3$, then
\begin{equation}\label{r-000050}
||S_f||\le \frac{2|A^2-B^2|(1-\sqrt{1-B^2})^2}{B^4}.
\end{equation}
\end{itemize}
Moreover, all the estimates are sharp.
\end{thm}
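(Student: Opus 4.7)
The plan is to reduce the computation of $\|S_f\|$ to a one-variable optimisation. From Theorem~\ref{Thm-r-00005}, the pointwise estimate of $|S_f(z)|$ depends only on $t:=|z|\in[0,1)$ and on whether $z\in S\cap\mathbb{D}$ or $z\in S^c\cap\mathbb{D}$. Multiplying each estimate by $(1-t^2)^2$ gives rational functions of $t$ whose suprema over $[0,1)$ will produce the bounds in \eqref{r-000045} and \eqref{r-000050}.

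For $(A,B)\in E_1\cup E_2$ the quantity to maximise is $(A-B)\gamma(t)$, with $\gamma$ as in \eqref{r-000040a}. Setting $s=t^2$ and writing $\gamma(t)=\tilde\gamma(s)$, a direct computation shows that $\tilde\gamma'(0)$ has the sign of $|A+B|-2(1-B^2)$, which is precisely the threshold appearing in \eqref{r-000045}. If $B\neq-1$ and $|A+B|\le 2(1-B^2)$, I would verify that $\gamma$ is monotonically decreasing on $[0,1)$ by examining the quartic $h$ in \eqref{r-000040b}, whose zeros in $(0,1)$ are exactly the interior critical points of $\gamma$: one checks $h(0)\le 0$, $h(1)=-4(1-B^2)\le 0$, and that the sum $2c_2+c_1=4|A+B|(B^2-1)\le 0$ of the coefficients of $h$ (as a quadratic in $s$) forces any positive root to lie beyond $s=1$; consequently $\sup\gamma=\gamma(0)=1$, giving $\|S_f\|\le A-B$. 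When $|A+B|>2(1-B^2)$ one has $h(0)>0$ and $h(1)<0$, so $h$ has a unique root $\alpha\in(0,1)$, at which $\gamma$ attains its maximum by sign analysis of $\gamma'$. The case $B=-1$ is separate because the denominator of $\gamma$ can degenerate; the numerator however carries a compensating factor $1-t^2$, and after cancellation $\gamma$ increases monotonically with $\sup\gamma=2/(1+A)$, giving $(A-B)\sup\gamma=2$.

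For $(A,B)\in E_3$ the estimate in \eqref{r-000015} is piecewise. On the outer region $S^c\cap\mathbb{D}=\{\delta_1\le t\le\delta_2\}$, the function to maximise is $G(t):=|A^2-B^2|(1-t^2)^2/(2(1-|B|t)^2)$; setting $G'(t)=0$ reduces to $|B|t^2-2t+|B|=0$, whose unique root in $(0,1)$ is $t_\ast=(1-\sqrt{1-B^2})/|B|$. Using the identities $1-|B|t_\ast=\sqrt{1-B^2}$ and $1-t_\ast^2=2\sqrt{1-B^2}(1-\sqrt{1-B^2})/B^2$, one gets $G(t_\ast)=2|A^2-B^2|(1-\sqrt{1-B^2})^2/B^4$, which matches \eqref{r-000050}. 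To finish I must check that $t_\ast\in[\delta_1,\delta_2]$ (so this maximum is actually realised on $S^c\cap\mathbb{D}$) and that $G(t_\ast)$ dominates the supremum of $(A-B)\gamma$ on the inner region $S\cap\mathbb{D}$; both reduce to the defining inequalities $|A+B|\ge 1+\sqrt{1-B^2}$ and $|A+B|>|B|$ of $E_3$, together with the continuity observation that $s_0(\delta_j)=\delta_j$, so the two pieces of the bound agree at $t=\delta_j$.

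Sharpness is recovered from the extremal families already produced after the proof of Theorem~\ref{Thm-r-00005}. For $(A,B)\in E_3$, the function $K_{A,B}$ satisfies $S_{K_{A,B}}(z)=-(A^2-B^2)/(2(1+Bz)^2)$, and evaluating at the real point $z$ with $|z|=t_\ast$ and appropriate sign attains equality in \eqref{r-000050}. For $(A,B)\in E_1\cup E_2$ the family $f_{z_0,p,q}$ from \eqref{r-000035}, with $z_0$ chosen as the real critical point of $\gamma$ (either $\pm\alpha$, or arbitrarily small in the monotone case), attains equality via identity \eqref{r-000043}. The step I expect to be the main technical obstacle is the monotonicity verification for $\gamma$ in the subcase $|A+B|\le 2(1-B^2)$, $B\neq -1$---ruling out spurious zeros of $h$ in $(0,1)$---together with the comparison of the two pieces in the $E_3$ case; both reduce to elementary but somewhat delicate sign analyses on quadratic polynomials in $s=t^2$.
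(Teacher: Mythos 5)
Your proposal is correct and follows essentially the same route as the paper: reduce to the one-variable functions $\gamma$ and $\gamma_1$, locate their critical points via the quartic $h$ and the quadratic $|B|t^2-2t+|B|$, use the matching values $\gamma(\delta_j)=\gamma_1(\delta_j)=2/|A+B|$ to compare the two pieces in the $E_3$ case, and realize sharpness with $K_{A,B}$ and the family $f_{z_0,p,q}$. The only cosmetic differences are that the paper rules out spurious zeros of $h$ by showing $h$ is strictly decreasing on $(0,1)$ (slightly cleaner than your coefficient-sum argument, which as stated needs the extra observation that $c_1<0$ when the leading coefficient is negative) and exhibits an explicit extremal $f_0$ with $S_{f_0}(0)=A-B$ rather than letting $z_0\to 0$.
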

\begin{proof}
For $-1\leq B<A\leq 1$, let $ f \in\mathcal{C}(A,B)$ be of the form \eqref{r-00001}. We prove the theorem by considering two different cases. \\

\noindent\textbf{Case-1: } Let $(A,B)\in E_1\cup E_2$. From \eqref{r-000010}, we obtain
$$|S_{f}(z)|\le (A-B)\frac{2-|A+B|(1-|z|^2)}{(1-|z|^2)(2-|A+B|(1-|z|^2)-2B^2|z|^2)},$$
and hence,
\begin{align}\label{r-000055}
||S_f||&=\sup\limits_{z\in\mathbb{D}}(1-|z|^2)^2|S_f(z)|\\
&\le (A-B)\sup\limits_{0\le |z|<1}\frac{(1-|z|^2)(2-|A+B|(1-|z|^2))}{2-|A+B|(1-|z|^2)-2B^2|z|^2}\nonumber\\
&=(A-B)\sup\limits_{0\le t<1}\gamma(t)\nonumber,
\end{align}
where $\gamma(t)$ be given by \eqref{r-000040a}. To find the supremum of $\gamma(t)$ on $[0,1)$, we consider two different subcases.\\

\noindent \textbf{Subcase-1a:} Let $B=-1$. Clearly, $A\ge 0$. Then $\gamma$ reduces to
$$\gamma(t)=\frac{1+A+(1-A)t^2}{1+A}.$$
It is easy to show that $\gamma$ is a strictly increasing function in $(0,1)$. Hence, from \eqref{r-000055}, we have $||S_f||\le 2$.\\

To show that the estimate is best possible, we consider the function $f_{z_0,1,1}\in\mathcal{C}(A,B)$ defined by \eqref{r-000035} where $-1<z_0<1$. From \eqref{r-000043}, we have
$$(1-z_0^2)^2|S_{f_{z_0,1,1}}(z_0)|=1+A+(1-A)z_0^2\rightarrow 2\quad \text{as}~z_0\rightarrow 1^{-}.$$

\noindent \textbf{Subcase-1b:} Let $B\ne -1$. If $A+B=0$. Then $\gamma$ reduces to
$$\gamma(t)=\frac{1-t^2}{1-B^2t^2}.$$
Then $\gamma$ is a strictly decreasing function in $(0,1)$. Hence, from \eqref{r-000055}, we have $||S_f||\le A-B$.\\

If $A+B\neq 0$, a simple computation gives
 $$\gamma'(t)=\frac{2th(t)}{{(2-|A+B|(1-t^2)-2B^2t^2)}^2},$$
where $h(t)$ is given by \eqref{r-000040b}. Moreover,
\begin{align}\label{r-000056}
 h'(t)&=-4|A+B|(2-|A+B|)t+4|A+B|(2B^2-|A+B|)t^3\\
 &=-4t|A+B|\left(2(1-B^2t^2)-|A+B|(1-t^2)\right).\nonumber
\end{align}
Note that
$$ 2(1-B^2t^2)-|A+B|(1-t^2)\ge (1-t^2)(2-|A+B|)>0.$$
Thus, the function $h$ is strictly decreasing in $(0,1)$. Since the polynomial $h(t)$ is symmetric about the origin and of degree $4$, it follows that $h(t)$ have at most two positive real root. Further, $h(0)=(2-|A+B|)(|A+B|-2(1-B^2))$ and $h(1)=-4(1-B^2)<0$. This shows that $h$ has the unique zero, say $\alpha$, in $(0,1)$ if $|A+B|>2(1-B^2)$ and has no zero in $(0,1)$ if $|A+B|\le 2(1-B^2)$. This further yield that $\gamma(t)$ has maximum at $\alpha$ if $|A+B|>2(1-B^2)$ and $\gamma(t)$ has maximum at $0$ if $|A+B|\le 2(1-B^2)$. Consequently, the required result \eqref{r-000045}  follows from \eqref{r-000055}.\\

To show that the estimate $A-B$ is sharp when $B\ne -1$ and $|A+B|\le 2(1-B^2)$, we consider the function $f_0(z)$ defined by
\begin{equation}\label{r-000057}
1+\frac{zf_0''(z)}{f_0'(z)}=\frac{1+Az^2}{1+Bz^2}.
\end{equation}
Then $f_0\in\mathcal{C}(A,B)$ and the Schwarzian derivative of $f_0$ is given by
$$S_{f_0}(z)=\frac{(A-B)(2-(A+B)z^2)}{2(1+Bz^2)^2}.$$
Moreover, $S_{f_0}(0)=A-B$ and therefore, $||S_{f_0}||=A-B$.\\

To show that the estimate $(A-B)\gamma(\alpha)$ is sharp when $B\ne -1$ and $|A+B|>2(1-B^2)$, we consider the function $f_{\alpha,p,q}$ defined by \eqref{r-000035}, where $\alpha$ is the unique root in $(0,1)$ of the equation $h(t)=0$, where $h(t)$ is given by \eqref{r-000040b}. From \eqref{r-000043}, we have
$$(1-\alpha^2)^2|S_{f_{\alpha,p,q}}(\alpha)|=\frac{(A-B)(1-\alpha^2)(2-|A+B|(1-\alpha^2))}{2-|A+B|(1-\alpha^2)-2B^2\alpha^2}= (A-B)\gamma(\alpha),$$
where $\gamma$ is given by \eqref{r-000040a}. Consequently, $||S_{f_{\alpha,p,q}}(z)||=(A-B)\gamma(\alpha)$.\\

 \noindent\textbf{Case-2: } Let $(A,B)\in E_3$. From \eqref{r-000015}, we obtain
\begin{align*}
|S_f(z)|\le
\begin{cases}
\dfrac{(A-B)(2-|A+B|(1-|z|^2))}{(1-|z|^2)(2-|A+B|(1-|z|^2)-2B^2|z|^2))} & \text{ if }~z\in S\cap \mathbb{D}\\[6mm]
\dfrac{|A^2-B^2|}{2(1-|B||z|)^2} & \text{ if }~ z\in S^c\cap \mathbb{D},
\end{cases}
\end{align*}
where $S$, $\delta_1$ and $\delta_2$ are same as in Theorem \ref{Thm-r-00005}. Therefore,
\begin{align}\label{r-000060}
||S_f||=\sup\limits_{z\in\mathbb{D}}(1-|z|^2)^2|S_f(z)|=\max \{M_1,M_2\},
\end{align}
with
\begin{align*}
M_1=(A-B)\sup\limits_{z\in S\cap \mathbb{D}}\frac{(1-|z|^2)(2-|A+B|(1-|z|^2))}{2-|A+B|(1-|z|^2)-2B^2|z|^2)}
= (A-B)\sup\limits_{t\in S\cap (0,1)}\gamma(t),
\end{align*}
and
\begin{align*}
M_2=(A-B)\sup\limits_{z\in S^c\cap \mathbb{D}}\frac{|A+B|(1-|z|^2)^2}{2(1-|B||z|)^2} =(A-B)\sup\limits_{t\in S^c\cap (0,1)}\gamma_1(t),
\end{align*}
where $\gamma(t)$ is given by \eqref{r-000040a} and $\gamma_1(t)$ is given by
$$\gamma_1(t)=\frac{|A+B|(1-t^2)^2}{2(1-|B|t)^2}.$$
Now, we consider two different subcases.\\

\noindent\textbf{Subcase-2a:} Let $B=-1$. Clearly, $A<0$. Thus, $\delta_1=\frac{2}{1-A}-1\in(0,1)$ and $\delta_2=1$. Therefore,
$$M_1=\displaystyle\sup\limits_{0\le t<\delta_1}(1+A+(1-A)t^2)=1+A+(1-A)\delta_1^2=\frac{2(1+A)}{1-A},$$
and
$$M_2=(A+1)\sup\limits_{\delta_1\le t< 1}\frac{(1-A)(1+t)^2}{2}=2(1-A^2).$$
A simple calculation gives $M_1<M_2$. Therefore, $||S_f||=2(1-A^2)$.\\

\noindent\textbf{Subcase-2b:} Let $B\ne -1$. Then $\delta_1$ and $\delta_2$ lies in $(0,1)$. First we find the value of $M_1$. A simple calculation gives
$$\gamma'(t)=\frac{2th(t)}{(2-|A+B|(1-t^2)-2B^2t^2)^2},$$
where $h(t)$ is a polynomial of degree $4$ given by \eqref{r-000040b} and is symmetric about the origin. Since $(A,B)\in E_3$, it follows that
\begin{align*}
h(0)&=(2-|A+B|)(-2+|A+B|+2B^2)\\
&\ge (2-|A+B|)(B^2-(1-B^2)+\sqrt{1-B^2})\\
&>0, \qquad[~\because 1-B^2\in(0,1)]
\end{align*}
and $h(1)=-4(1-B^2)<0$. Thus, $h$ has exactly one zero in $(0,1)$. Since, $\delta_1$ and $\delta_2$ are the zeros of \eqref{r-000027}, it follows that
\begin{align*}
\gamma(\delta_1)&=\frac{(1-\delta_1^2)(2-|A+B|(1-\delta_1^2))}{2-|A+B|(1-\delta_1^2)-2B^2\delta_1^2}
=\frac{2|B|\delta_1(1-\delta_1^2)}{2|B|\delta_1(1-|B|\delta_1)}
=\frac{(1-\delta_1^2)}{1-|B|\delta_1}\\
&=\frac{2}{|A+B|}.
\end{align*}
Similarly, $$\gamma(\delta_2)=\frac{2}{|A+B|}=\gamma(\delta_1).$$
By Rolle's theorem, $\gamma'(t)$ has atleast one zero, say $\delta$, in $(\delta_1,\delta_2)$. Since $h$ has exactly one zero in $(0,1)$, $\gamma'(t)$ has exactly one zero  $\delta$ in $(\delta_1,\delta_2)$. Consequently, $\gamma$ is strictly increasing in $(0,\delta)$ and strictly decreasing in $(\delta,1)$. Therefore,
$$M_1=(A-B)\sup\limits_{t\in S\cap (0,1)}\gamma(t)=(A-B)\gamma(\delta_1)=(A-B)\gamma(\delta_2)=\frac{2(A-B)}{|A+B|}.$$
Next we will find the value of $M_2$. Clearly,
$$\gamma_1'(t)=\frac{|A+B|(1-t^2)\psi(t)}{(1-|B|t)^3},$$
where $\psi(t)=|B|t^2-2t+|B|$. Since, $\psi(0)=|B|>0$ and $\psi(1)=-2(1-|B|)<0$. Thus, $\psi$ has an unique zero, say $\beta$, in $(0,1)$ and $\beta$ is given by
$$\beta=\frac{1-\sqrt{1-B^2}}{|B|}.$$
A simple calculation yields that
\begin{equation}\label{r-000065}
\gamma_1(\delta_1)=\gamma_1(\delta_2)=\frac{2}{|A+B|}=\gamma(\delta_1)=\gamma(\delta_2).
\end{equation}
By Rolle's theorem, $\gamma'_1(t)$ has atleast one zero in $(\delta_1,\delta_2)$. Since, $\psi$ has exactly one zero $\beta$ in $(0,1)$, $\gamma'_1(t)$ has exactly one zero $\beta$ in $(\delta_1,\delta_2)$.  Consequently, $\gamma_1$ is strictly increasing in $(0,\beta)$ and strictly decreasing in $(\beta,1)$. Therefore, $\gamma_1$ has maximum at $\beta$. That is,
$$M_2=(A-B)\sup\limits_{\delta_1\le t\le \delta_2}\gamma_1(t)=(A-B)\gamma_1(\beta)=\frac{2|A^2-B^2|(1-\sqrt{1-B^2})^2}{B^4}.$$
From \eqref{r-000065}, we note that
$$
M_2=(A-B)\gamma_1(\beta) \ge (A-B)\gamma_1(\delta_1)=(A-B)\gamma(\delta_1)=M_1.
$$
Therefore, from \eqref{r-000060}, we get the desired result.\\

To show that the estimate is sharp, let us consider the function $K_{A,B}(z)$ defined by \eqref{r-00010}. The Schwarzian derivative of $K_{A,B}$ is given by
$$S_{K_{A,B}}(z)=-\frac{A^2-B^2}{2(1+Bz)^2},$$
and so
\begin{align*}
||S_{K_{A,B}}||&=\sup\limits_{z\in\mathbb{D}}(1-|z|^2)^2|S_{K_{A,B}}(z)|=\frac{|A^2-B^2|}{2}\sup\limits_{z\in\mathbb{D}}\frac{(1-|z|^2)^2}{|1+Bz|^2}.
\end{align*}
If $B> 0$, then
\begin{align*}
&\frac{|A^2-B^2|}{2} \sup\limits_{0<t<1}(1-t^2)^2|S_{K_{A,B}}(-t)|= (A-B) \sup_{t\in (0,1)} \gamma_1(t)\\
&\quad = M_2=\dfrac{2|A^2-B^2|(1-\sqrt{1-B^2})^2}{B^4}.
\end{align*}
If $B<0$, then
\begin{align*}
&\frac{|A^2-B^2|}{2}\sup\limits_{0<t<1}(1-t^2)^2|S_{K_{A,B}}(t)| = (A-B) \sup_{t\in (0,1)} \gamma_1(t)\\
&\quad =M_2=\dfrac{2|A^2-B^2|(1-\sqrt{1-B^2})^2}{B^4}.
\end{align*}
Therefore,
$$
||S_{K_{A,B}}||=\dfrac{2|A^2-B^2|(1-\sqrt{1-B^2})^2}{B^4}.
$$
This completes the proof.
\end{proof}

For particular values of $A$ and $B$ with $-1\le B<A\le 1$, one can obtain sharp estimates for the Schwarzian norm of functions belonging to several subclasses of $\mathcal{S}$. If we choose $A=1$ and $B=-1$ in Theorem \ref{Thm-r-00010},  then the first inequality in \eqref{r-000045} provides Schwarzian norm  estimate for the class of convex functions, which was first proved by Robertson \cite{Robertson-1969}.

\begin{cor}\label{r-Cor-00001}
Let $f\in\mathcal{C}(1,-1)=:\mathcal{C}$ be of the form \eqref{r-00001}. Then the Schwarzian norm satisfies the sharp inequality
$$||S_f||\le 2.$$
\end{cor}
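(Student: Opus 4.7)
The plan is to derive the corollary directly from Theorem \ref{Thm-r-00010} by specialization to $A=1$, $B=-1$. First I would verify membership in the correct part of the partition \eqref{r-000005}. With $A=1$ and $B=-1$ one has $|A+B|=0$ and $\sqrt{1-B^2}=0$, so the defining conditions for $E_2$, namely $|A+B|\le 1-\sqrt{1-B^2}$ and $|A+B|\le |B|$, reduce to $0\le 0$ and $0\le 1$, both of which hold. Hence $(1,-1)\in E_2\subset E_1\cup E_2$, and part (i) of Theorem \ref{Thm-r-00010} applies. Since $B=-1$, the first branch of the piecewise estimate \eqref{r-000045} is the relevant one and immediately yields $\|S_f\|\le 2$; no further analysis of $\gamma$, the polynomial $h$, or their critical points is needed.

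For sharpness, the plan is to invoke the one-parameter extremal family $f_{z_0,1,1}$ already constructed in Subcase-1a of the proof of Theorem \ref{Thm-r-00010}. By formula \eqref{r-000043} specialized to $A=1$, $B=-1$, and using $A-B=2$, $|A+B|=0$, $B^2=1$, one has
$$(1-z_0^2)^2\,|S_{f_{z_0,1,1}}(z_0)| \;=\; \frac{(A-B)(1-z_0^2)(2-|A+B|(1-z_0^2))}{2-|A+B|(1-z_0^2)-2B^2 z_0^2} \;=\; 2$$
for every $z_0\in(-1,1)$, so $\|S_{f_{z_0,1,1}}\|\ge 2$, matching the upper bound. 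Since the theorem already guarantees $f_{z_0,1,1}\in\mathcal{C}(1,-1)=\mathcal{C}$, this exhibits an extremal convex function and completes the proof.

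There is essentially no obstacle here; the entire analytic machinery — the Dieudonn\'{e}-based pointwise bound of Theorem \ref{Thm-r-00005} and the optimization in $t\in[0,1)$ carried out in Theorem \ref{Thm-r-00010} — has already been executed. The corollary is simply the evaluation of that framework at the classical endpoint $(A,B)=(1,-1)$, and if one wishes to be fully explicit one could alternatively exhibit $f(z)=\tfrac{1}{2}\log\tfrac{1+z}{1-z}$ (which lies in $\mathcal{C}$ and satisfies $S_f(z)=2/(1-z^2)^2$, so that $\|S_f\|=2$) as a concrete extremal, recovering Robertson's classical bound.
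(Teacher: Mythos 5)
Your proposal is correct and follows the same route as the paper, which obtains the corollary by setting $A=1$, $B=-1$ in Theorem \ref{Thm-r-00010} and invoking the $B=-1$ branch of \eqref{r-000045}, with sharpness from the family $f_{z_0,1,1}$ of Subcase-1a (equivalently your explicit extremal $\tfrac12\log\frac{1+z}{1-z}$, which is exactly $f_{z_0,1,1}$ here since $b=0$). One trivial slip: with $B=-1$ one has $1-\sqrt{1-B^2}=1$, so the $E_2$ condition reduces to $0\le 1$ rather than $0\le 0$; this does not affect the conclusion that $(1,-1)\in E_2$.
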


If we choose $A=1-2\alpha$, $0\le\alpha\le 1$ and $B=-1$ in Theorem \ref{Thm-r-00010} then the first inequality in \eqref{r-000045} and \eqref{r-000050} provides Schwarzian norm estimate for the class of convex functions of order $\alpha$, which was obtained by Suita \cite{Suita-1996}. Also, the inequality \eqref{r-000050} in Theorem \ref{Thm-r-00010} in conjunction with Theorem \ref{Thm-r-0001} gives the quasiconformal extension.

\begin{cor}\label{r-Cor-00005}
Let $f\in\mathcal{C}(1-2\alpha,-1)=:\mathcal{C}(\alpha)$, $0\le\alpha\le 1$ be of the form \eqref{r-00001}. Then the Schwarzian norm satisfies the sharp inequality
$$||S_f||\le
\begin{cases}
2, &~ \text{for}~~0\le\alpha\le 1/2,\\[2mm]
8\alpha(1-\alpha), & ~~\text{for}~1/2\le\alpha\le 1.
\end{cases}
$$
Further, $f$ can be extended to a $4\alpha(1-\alpha)$-quasiconformal mapping of the Riemann sphere $\widehat{\mathbb{C}}$ when $1/2<\alpha\le 1$.
\end{cor}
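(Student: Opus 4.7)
The plan is to specialize Theorem \ref{Thm-r-00010} to $A = 1-2\alpha$, $B = -1$ (which by definition gives $\mathcal{C}(\alpha)$) and then feed the resulting norm bound into Theorem \ref{Thm-r-0001} for the quasiconformal conclusion.

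The first step is to determine which of the three sets $E_1$, $E_2$, $E_3$ contains $(1-2\alpha,-1)$. Since $B=-1$ forces $\sqrt{1-B^2}=0$, the set $E_1$ is empty along this slice, and the defining inequalities for $E_2$ and $E_3$ collapse to $|A+B|\le 1$ and $|A+B|>1$ respectively. Because $|A+B|=2\alpha$, this places $(A,B)\in E_2$ exactly when $0\le\alpha\le 1/2$ and $(A,B)\in E_3$ when $1/2<\alpha\le 1$. In the former regime, the first case of \eqref{r-000045} immediately yields $||S_f||\le 2$. In the latter, Theorem \ref{Thm-r-00010}(ii) gives
\[
||S_f||\le \frac{2|A^2-B^2|(1-\sqrt{1-B^2})^2}{B^4},
\]
which, after substituting $|A^2-B^2|=|(1-2\alpha)^2-1|=4\alpha(1-\alpha)$ and $(1-\sqrt{1-B^2})^2/B^4=1$, collapses to $8\alpha(1-\alpha)$. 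The two expressions agree at $\alpha=1/2$ (both equal $2$), and sharpness is inherited directly from the extremal constructions $f_{z_0,p,q}$ of \eqref{r-000035} and $K_{A,B}$ of \eqref{r-00010} already exhibited in Theorem \ref{Thm-r-00010}; no new extremal function is needed.

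For the quasiconformal extension on the range $1/2<\alpha\le 1$, I would rewrite the bound as $||S_f||\le 2k$ with $k=4\alpha(1-\alpha)$. The identity $1-4\alpha(1-\alpha)=(1-2\alpha)^2$ shows $k\in[0,1)$ precisely when $\alpha\ne 1/2$, so the converse direction of Theorem \ref{Thm-r-0001} applies and furnishes a $4\alpha(1-\alpha)$-quasiconformal extension of $f$ to $\widehat{\mathbb{C}}$. Since the entire corollary is a direct specialization of previously established results, there is no genuine obstacle; the only points demanding care are the correct placement of $(1-2\alpha,-1)$ in the partition $E_1\cup E_2\cup E_3$ and the verification that the resulting $k$ is an admissible quasiconformality constant.
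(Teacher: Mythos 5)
Your proposal is correct and follows exactly the route the paper intends: the corollary is obtained by specializing Theorem \ref{Thm-r-00010} with $A=1-2\alpha$, $B=-1$ (noting $\sqrt{1-B^2}=0$ places $(A,B)$ in $E_2$ for $\alpha\le 1/2$ and in $E_3$ for $\alpha>1/2$, with $|A^2-B^2|=4\alpha(1-\alpha)$), and then applying the converse half of Theorem \ref{Thm-r-0001} with $k=4\alpha(1-\alpha)<1$ for $\alpha\ne 1/2$. Your computations and the verification that $k$ is admissible are all accurate, and sharpness is indeed inherited from the extremal functions already produced in Theorem \ref{Thm-r-00010}.
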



If we choose $A=\alpha$, $0< \alpha\le 1$ and $B=0$ in Theorem \ref{Thm-r-00010} then the second inequality in \eqref{r-000045} in conjunction with Theorem \ref{Thm-r-0001} gives the following result.

\begin{cor}\label{r-Cor-00010}
Let $f\in\mathcal{C}(\alpha,0)$,  $0< \alpha\le 1$ be of the form \eqref{r-00001}. Then the Schwarzian norm satisfies the sharp inequality $||S_f||\le \alpha$ and equality occurs for the function $f_0$ which is given by \eqref{r-000057}. Further, $f$ can be extended to a $\alpha/2$-quasiconformal mapping of the Riemann sphere $\widehat{\mathbb{C}}$.
\end{cor}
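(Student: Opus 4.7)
The plan is to specialize Theorem \ref{Thm-r-00010} to the pair $(A,B)=(\alpha,0)$ and invoke Theorem \ref{Thm-r-0001} for the quasiconformal conclusion. First I would locate $(\alpha,0)$ among the regions $E_1, E_2, E_3$ defined in \eqref{r-000005}: with $B=0$ we have $1-\sqrt{1-B^2}=0$ and $1+\sqrt{1-B^2}=2$, so the defining inequality of $E_1$ reduces to $0<\alpha<2$, which holds for every $\alpha\in(0,1]$. Hence $(\alpha,0)\in E_1\subset E_1\cup E_2$, so part (i) of Theorem \ref{Thm-r-00010} applies.

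Next I would identify the correct branch of \eqref{r-000045}. Since $B=0\neq -1$ and $|A+B|=\alpha\le 1<2=2(1-B^2)$, the middle branch is relevant, yielding $||S_f||\le A-B=\alpha$. To verify sharpness, I would substitute $A=\alpha$, $B=0$ into the extremal function $f_0$ defined by \eqref{r-000057}, which then satisfies $1+zf_0''(z)/f_0'(z)=1+\alpha z^2$. The Schwarzian derivative formula derived in the proof of Theorem \ref{Thm-r-00010} collapses to $S_{f_0}(z)=\alpha(2-\alpha z^2)/2$, so evaluating at $z=0$ gives $|S_{f_0}(0)|=\alpha$, whence $||S_{f_0}||\ge\alpha$; combined with the upper bound this forces equality.

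Finally, the quasiconformal extension is immediate from the converse part of Theorem \ref{Thm-r-0001}: setting $k:=\alpha/2$, the range $0<\alpha\le 1$ gives $k\in(0,1/2]\subset[0,1)$, and the estimate $||S_f||\le\alpha=2k$ implies that $f$ extends to an $\alpha/2$-quasiconformal mapping of $\widehat{\mathbb{C}}$. Because the corollary is a direct specialization, no real obstacle arises; the only point requiring care is correctly matching $(\alpha,0)$ to the right region and subcase of Theorem \ref{Thm-r-00010}, and the strict positivity $\alpha>0$ together with $\alpha\le 1$ makes each of these verifications a one-line inequality check.
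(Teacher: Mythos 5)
Your proposal is correct and follows exactly the paper's route: the paper derives this corollary by choosing $A=\alpha$, $B=0$ in Theorem \ref{Thm-r-00010} (second branch of \eqref{r-000045}, since $B\ne-1$ and $|A+B|=\alpha\le 1<2=2(1-B^2)$), verifying sharpness via $f_0$ from \eqref{r-000057}, and invoking the converse part of Theorem \ref{Thm-r-0001} with $k=\alpha/2$. All of your verification steps (membership of $(\alpha,0)$ in $E_1$, the branch selection, and $S_{f_0}(0)=\alpha$) check out.
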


If we choose $A=\alpha$ and $B=-\alpha$, $0< \alpha\le 1$ in Theorem \ref{Thm-r-00010} then the second inequality in \eqref{r-000045} in conjunction with Theorem \ref{Thm-r-0001} gives the following result.

\begin{cor}\label{r-Cor-00015}
Let $f\in\mathcal{C}(\alpha,-\alpha)$,  $0< \alpha\le 1$ be of the form \eqref{r-00001}. Then the Schwarzian norm satisfies the sharp inequality $||S_f||\le 2\alpha$ and equality occurs for the function $f_0$ which is given by \eqref{r-000057}. Further, $f$ can be extended to a $\alpha$-quasiconformal mapping of the Riemann sphere $\widehat{\mathbb{C}}$ when $0< \alpha< 1$.
\end{cor}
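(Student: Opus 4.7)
The plan is to specialize Theorem \ref{Thm-r-00010} to the pair $A=\alpha$, $B=-\alpha$ with $0<\alpha\le 1$. First I would locate $(\alpha,-\alpha)$ within the partition $E=E_1\cup E_2\cup E_3$ defined in \eqref{r-000005}. Since $A+B=0$, we have $|A+B|=0$, which trivially satisfies both $|A+B|\le 1-\sqrt{1-B^2}$ and $|A+B|\le |B|=\alpha$. Hence $(\alpha,-\alpha)\in E_2$, and part (i) of Theorem \ref{Thm-r-00010} is the relevant one. If $\alpha=1$ then $B=-1$ and the first line of \eqref{r-000045} yields $\|S_f\|\le 2=2\alpha$. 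If $0<\alpha<1$ then $B\ne -1$ and $|A+B|=0\le 2(1-\alpha^2)$, so the second line of \eqref{r-000045} yields $\|S_f\|\le A-B=2\alpha$. In both cases we obtain $\|S_f\|\le 2\alpha$.

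For sharpness I would take the extremal candidate $f_0$ already defined by \eqref{r-000057}, whose Schwarzian derivative was recorded in the course of proving Case-1 of Theorem \ref{Thm-r-00010} as
\[
S_{f_0}(z)=\frac{(A-B)(2-(A+B)z^2)}{2(1+Bz^2)^2}=\frac{2\alpha}{(1-\alpha z^2)^2}.
\]
Evaluating at $z=0$ gives $|S_{f_0}(0)|=2\alpha$, so $\|S_{f_0}\|\ge 2\alpha$, and combined with the upper bound one concludes $\|S_{f_0}\|=2\alpha$. Thus the estimate $\|S_f\|\le 2\alpha$ is sharp, and equality occurs precisely for $f_0$.

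The quasiconformal extension is then an immediate application of the converse half of Theorem \ref{Thm-r-0001}. Since $\|S_f\|\le 2\alpha=2k$ with $k=\alpha$, and since the hypothesis $0<\alpha<1$ ensures $0\le k<1$, the function $f$ extends to an $\alpha$-quasiconformal mapping of the Riemann sphere $\widehat{\mathbb{C}}$. The only subtlety to keep in mind is to exclude the boundary value $\alpha=1$ from the quasiconformal conclusion, since Theorem \ref{Thm-r-0001} requires the dilatation $k$ to be strictly less than $1$; apart from this bookkeeping, the entire corollary is a direct specialization and no genuine obstacle arises.
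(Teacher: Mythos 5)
Your proposal is correct and follows essentially the same route as the paper: identify $(\alpha,-\alpha)\in E_2$, apply part (i) of Theorem \ref{Thm-r-00010} (with the $B=-1$ line covering $\alpha=1$ and the $A-B$ line covering $0<\alpha<1$), verify sharpness via $S_{f_0}(0)=A-B=2\alpha$ for the function $f_0$ of \eqref{r-000057}, and invoke the converse half of Theorem \ref{Thm-r-0001} for the quasiconformal extension when $\alpha<1$. The only nitpick is the phrase ``equality occurs precisely for $f_0$,'' which overstates uniqueness; the corollary only asserts that equality is attained by $f_0$.
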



%

%

%

\vspace{5mm}
\textbf{Data availability:} Data sharing not applicable to this article as no data sets were generated or analyzed during the current study.\\[2mm]

\textbf{Authors contributions:} All authors contributed equally to the investigation of the problem and the order of the authors is given alphabetically according to their surname. All authors read and approved the final manuscript.\\[2mm]

\textbf{Acknowledgement:} The second named author thanks the University Grants Commission for the financial support through UGC Fellowship (Grant No. MAY2018-429303).

\end{document}